\newtheorem{theorem}{\indent{Theorem}}[section]
\newtheorem{proposition}{\indent{Proposition}}[section]
\newtheorem{definition}{\indent{Definition}}[section]
\newtheorem{example}{\indent{example}}[section]
\begin{document}
 \pagenumbering{arabic}
\title
{ A kind of bifurcation of limit cycle from nilpotent critical point
    \thanks{This research was partially supported by the National
Natural Science Foundation of China (11371373,11201211). Corresponding
author: E-mail  address:liuyirong@163.com, lf0539@126.com}}
\author{{Liu Yirong $^{1}$ and Li Feng $^{2}$}\\
{(\small \it $^1$School of Mathematics and statistics, Central South University,
 Changsha, Hunan, 410083, P.R. China.})\\
{(\small \it $^2$School of Science, Linyi University, Linyi ,
Shandong, 276005, P.R. China.})}
\date{}
\maketitle \pagestyle{myheadings} \markboth{ Yirong  Liu and Feng Li
}{A kind of bifurcation of limit cycle from nilpotent critical point} \noindent

\begin{abstract}
In this paper, an interesting and new bifurcation phenomenon that limit cycles could be bifurcated from nilpotent node (focus) by changing its stability was investigated. It is different from lowing its  multiplicity in order to get limit cycles. We prove that $n^2+n-1$ limit cycles could be bifurcated by this way for $2n+1$ degree system. Moreover, this upper bound
could be reached. At last, we give two examples to show that $N(3)=1$ and $N(5)=5$.
\end{abstract}

{\bf Key Words:} Nilpotent critical point; Limit cycle; Bifurcation;

\section{Introduction and Preliminary Knowledge}

One of the most intriguing aspects of the dynamics of real planar polynomial vector fields is the close relationship
between the center conditions and bifurcation of limit cycle.
Bifurcation of limit cycle from a high-order critical point in plane is becoming more and more important, there have been many results about this problem.
The Bogdanov Takens bifurcation from saddle-node point was discussed by Xiao and Zhan and De Maesschalck,
see \cite{Xiao-2007,Xiao-2008,De Maesschalck-2011} and \cite{Zhang-2004}, bifurcation of limit cycle from degenerate critical point was investigated by Han,
see \cite{Han-2012}. Especially, there were many results about bifurcations of limit cycles from nilpotent critical point, see \cite{Ly-1966,Ta-1974,Mo-1982,S-Z-2002,AG-2005} and \cite{AG-2006,Han-2008,Han-2013,Liu-Li-2009a,Liu-Li-2009b,Liu-Li-2011a,Liu-Li-2011b}.

The following planar real systems
\begin{equation}\label{e1.1}
    \frac{dx}{dt}=y+\sum\limits_{i+j=2}^{\infty}a_{ij}x^iy^j=\Phi(x,y),\ \
    \frac{dy}{dt}=\sum\limits_{i+j=2}^{\infty}b_{ij}x^iy^j=\Psi(x,y).
 \end{equation}
whose functions of right hand are analytic at the neighborhood of origin will be discussed in this paper.
The linear parts  of \eqref{e1.1} has double zero eigenvalues
but the matrix of the linearized system of \eqref{e1.1} at the
origin is not identically null. The origin $O(0,0)$ is called a
nilpotent singular point.

 \cite{Liu-1999} gave the definition of  the multiplicity of the point for
\begin{equation}\label{e1.4}
\begin{split}
    &\frac{dx}{dt}=\sum\limits_{k+j=0}^na_{kj}x^ky^j=P(x,y),\\
&\frac{dy}{dt}=\sum\limits_{k+j=0}^mb_{kj}x^ky^j=Q(x,y)
\end{split}
\end{equation}

\begin{definition}\label{d1.1}
Suppose $(x_0,y_0)$ is an isolate critical point \eqref{e1.4}( real or complex
), if the crossing number of $P(x,y)=0$ and $Q(x,y)=0$ at $(x_0,y_0)$ is N, then
the point $(x_0,y_0)$ is called a N-multiple singular point of \eqref{e1.5}, N
is called the multiplicity of the point $(x_0,y_0).$
\end{definition}

From Definition 2.1 in \cite{Liu-Li-2011a}, we have

\begin{proposition}\label{p1.1}
If \ $\Psi(x,y(x))=A x^N+o(x^N),\ \ A\neq0$, then the multiplicity
of the origin is a N-multiple singular point of \eqref{e1.1}.
\end{proposition}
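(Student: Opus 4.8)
The plan is to exploit the fact that the branch of $\{\Phi=0\}$ through the origin is smooth, which collapses the two–variable crossing number into a one–variable order of vanishing. First I would construct the curve $y=y(x)$ explicitly. Writing $\Phi(x,y)=y+\sum_{i+j\ge 2}a_{ij}x^iy^j$, we have $\Phi(0,0)=0$ and $\partial_y\Phi(0,0)=1\neq 0$, so the implicit function theorem supplies a unique analytic function $y(x)$ with $y(0)=0$ and $\Phi(x,y(x))\equiv 0$ for $x$ near $0$. This is precisely the local branch of $\{\Phi=0\}$ at $O$, and $O$ is a smooth point of it, since $\nabla\Phi(0,0)=(0,1)\neq(0,0)$. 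The hypothesis then says that $\Psi$ restricted to this branch, namely $\Psi(x,y(x))=Ax^{N}+o(x^{N})$ with $A\neq 0$, vanishes to order exactly $N$ in $x$.

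Second, I would use the invariance of the crossing number under local analytic changes of coordinates to straighten out the branch. Introduce $u=x$, $v=\Phi(x,y)$. The Jacobian of $(x,y)\mapsto(u,v)$ at the origin is the identity (using $\partial_x\Phi(0,0)=0$ and $\partial_y\Phi(0,0)=1$), so $(u,v)$ is an admissible coordinate system near $O$. In these coordinates $\{\Phi=0\}=\{v=0\}$, which is parametrized by $u\mapsto(u,0)$, and $\Psi$ becomes an analytic function $\widetilde\Psi(u,v)$ satisfying
\[
\widetilde\Psi(u,0)=\Psi(x,y(x))\big|_{x=u}=Au^{N}+o(u^{N}).
\]

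Third, I would compute the crossing number of the coordinate line $\{v=0\}$ with $\{\widetilde\Psi=0\}$ as the $u$-order of $\widetilde\Psi(u,0)$. In the local ring $\mathcal O_O$ we have $(\Phi,\Psi)=(v,\widetilde\Psi)$, whence $\mathcal O_O/(\Phi,\Psi)\cong\mathbb C\{u\}/(\widetilde\Psi(u,0))$, and the dimension of the latter is exactly the order of $\widetilde\Psi(u,0)$, i.e.\ $N$. By coordinate invariance this coincides with the crossing number of $\Phi=0$ and $\Psi=0$ at $O$, so by Definition~\ref{d1.1} the origin is an $N$-multiple singular point, as claimed.

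The step carrying the real content is the restriction formula in the last paragraph: the identification of the intersection multiplicity with the order of $\Psi$ along the smooth branch. If the definition of crossing number adopted in \cite{Liu-Li-2011a} is the resultant–based one instead of the local-algebra one, I would eliminate $v$ between $v=0$ and $\widetilde\Psi=0$, which returns precisely $\widetilde\Psi(u,0)$, and read off its order $N$; the two formulations agree because $\{v=0\}$ is a coordinate axis. The only subtlety to watch is that the higher-order terms of $\Phi$ neither obstruct the implicit solution nor disturb the order count, but the analyticity of $y(x)$ guarantees that $\Psi(x,y(x))$ is a genuine convergent power series in $x$ whose leading exponent is unambiguously $N$, so no such difficulty arises.
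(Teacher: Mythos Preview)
The paper does not actually prove this proposition; the sentence preceding it reads ``From Definition~2.1 in \cite{Liu-Li-2011a}, we have'' and then the statement is simply asserted. So there is no argument in the paper to compare against.

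Your proof, by contrast, is complete and correct. You rightly observe that $\partial_y\Phi(0,0)=1$ makes the branch $\{\Phi=0\}$ smooth at the origin, produce the implicit function $y=y(x)$, straighten the branch via the analytic change of variables $(u,v)=(x,\Phi(x,y))$, and then reduce the intersection multiplicity $\dim_{\mathbb C}\mathcal O_O/(\Phi,\Psi)$ to $\dim_{\mathbb C}\mathbb C\{u\}/(\widetilde\Psi(u,0))$, which equals the order $N$ of $\Psi(x,y(x))$. This is precisely the standard computation of the local intersection number when one of the two curves is nonsingular at the point, and it is almost certainly what the referenced definition in \cite{Liu-Li-2011a} packages. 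Your remark that the resultant-based definition gives the same answer (eliminating $v$ returns $\widetilde\Psi(u,0)$) is also correct and a useful robustness check. In short, you have supplied the argument the paper outsourced.
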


A high order singular point could be broken into some low order singular point (real or complex) by a small parameters perturbation. Now, we consider the  perturbed system of
\eqref{e1.1} and \eqref{e1.4}
\begin{equation}\label{e1.5}
    \frac{dx}{dt}=\Phi(x,y)+h(x,y,\bm{\varepsilon}),\  \frac{dy}{dt}=\Psi(x,y)+g(x,y,\bm{\varepsilon}),
 \end{equation}
 and
\begin{equation}\label{e1.6}
    \frac{dx}{dt}=P(x,y)+h(x,y,\bm{\varepsilon}),\  \frac{dy}{dt}=Q(x,y)+g(x,y,\bm{\varepsilon}),
 \end{equation}
where
 $\bm{\varepsilon}=(\varepsilon_1,\varepsilon_2,\cdots,\varepsilon_l)$
is a finite dimension small parameters, $h(x,y,\bm{\varepsilon})$ and $
 g(x,y,\bm{\varepsilon})$ are power series of  $(x,y,\bm{\varepsilon})$
 with nonzero convergence radius, and $h(x,y,\bm{0})=
 g(x,y,\bm{0})=0$. From Theorem 1 in \cite{Liu-1999} and Theorem 2.1
 in \cite{Liu-Li-2011a}, it is easy to get the following theorem.

\begin{theorem}\label{t1.1}
Suppose the origin of system \eqref{e1.1} \emph{( }or \eqref{e1.4}\emph{)} is a
N-multiple singular point, then when $||\bm{\varepsilon}||<<1$
, the sum of multiplicity of all complex singular point in the sufficiently small neighborhood of origin of  \eqref{e1.5} \emph{(} or \eqref{e1.6}\emph{)}
is exactly  $N$.
\end{theorem}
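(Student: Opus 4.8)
The plan is to deduce Theorem~\ref{t1.1} as a quantitative consequence of the multiplicity definition (Definition~\ref{d1.1}) together with the classical theory of intersection multiplicity and its stability under small perturbation. The key idea is that the multiplicity $N$ of the origin is precisely the intersection (crossing) number of the two analytic curves $\Phi(x,y)=0$ (or $P(x,y)=0$) and $\Psi(x,y)=0$ (or $Q(x,y)=0$) at the origin, and that this intersection number is conserved when the defining functions are perturbed analytically by the small terms $h(x,y,\bm{\varepsilon})$ and $g(x,y,\bm{\varepsilon})$.

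First I would set up the algebraic framework. Since the right-hand sides are analytic with nonzero convergence radius, I would work in the local ring of convergent power series at the origin and interpret the crossing number of Definition~\ref{d1.1} as the local intersection multiplicity $I_O = \dim_{\mathbb{C}} \mathcal{O}_O / (\Phi, \Psi)$, which by hypothesis equals $N$ because the origin is an $N$-multiple singular point. The central analytic tool is the conservation of total intersection multiplicity: if a complete intersection has an isolated zero of multiplicity $N$ at a point, then under a sufficiently small analytic deformation the zero splits into finitely many (possibly complex) zeros in a small neighborhood, the sum of whose local intersection multiplicities is again exactly $N$. This is the local analogue of the continuity/conservation of degree for analytic maps (a Rouché-type or Weierstrass-preparation argument), and it is the content I would invoke from Theorem~1 of \cite{Liu-1999} and Theorem~2.1 of \cite{Liu-Li-2011a}.

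Next I would carry out the argument in three steps. Step one: fix a polydisc $D$ about the origin small enough that $O$ is the only common zero of $\Phi$ and $\Psi$ in $\overline{D}$, and so that on the boundary $\partial D$ the map $(\Phi,\Psi)$ is bounded away from zero, with $|(\Phi,\Psi)| \ge \delta > 0$. Step two: because $h(x,y,\bm{0}) = g(x,y,\bm{0}) = 0$ and both are power series in $(x,y,\bm{\varepsilon})$ with nonzero convergence radius, for $\|\bm{\varepsilon}\|$ sufficiently small the perturbation satisfies $|(h,g)| < \delta$ uniformly on $\partial D$; hence the perturbed map $(\Phi+h,\ \Psi+g)$ is homotopic to $(\Phi,\Psi)$ on $\partial D$ through nonvanishing maps, so the two maps have the same topological degree on $\partial D$. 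Step three: this common degree equals $N$ (the multiplicity of the unperturbed isolated zero), and it also equals the sum, over all common zeros of the perturbed system inside $D$, of their local multiplicities, by the additivity of the intersection multiplicity / degree over isolated zeros. Combining these gives that the sum of the multiplicities of all (real or complex) singular points of \eqref{e1.5} (or \eqref{e1.6}) in a sufficiently small neighborhood of the origin is exactly $N$. The same reasoning applies verbatim to the polynomial system \eqref{e1.4}/\eqref{e1.6}, with Proposition~\ref{p1.1} identifying $N$ via the order of vanishing of $\Psi(x,y(x))$ along the curve $\Phi=0$.

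The main obstacle I expect is the rigorous justification that the intersection multiplicity is genuinely conserved in the analytic (non-polynomial) setting and that it equals the topological degree of the boundary map. In the polynomial case one could appeal to Bézout's theorem and projective compactification, but here $\Phi,\Psi$ are only analytic germs, so I must work locally and rely on Weierstrass preparation to reduce $\Psi$ (after a suitable analytic change of coordinates making $O$ isolated and the curve $\Phi=0$ parametrizable as $y=y(x)$, per Proposition~\ref{p1.1}) to a distinguished polynomial in one variable whose root count, with multiplicity, is stable under small perturbation. The delicate point is controlling that no zeros escape to $\partial D$ and that complex zeros are counted correctly; this is exactly where the uniform boundary estimate $|(h,g)|<\delta$ on $\partial D$ and the homotopy invariance of degree do the essential work, so I would make sure the neighborhood $D$ and the smallness threshold for $\|\bm{\varepsilon}\|$ are chosen in that order and held fixed.
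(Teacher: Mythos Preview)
Your proposal is correct and in fact supplies considerably more detail than the paper itself: the paper does not prove Theorem~\ref{t1.1} at all but simply states that it follows immediately from Theorem~1 of \cite{Liu-1999} and Theorem~2.1 of \cite{Liu-Li-2011a}. Your degree-theoretic/Rouch\'e argument, together with the identification of the crossing number as the local intersection multiplicity $\dim_{\mathbb{C}}\mathcal{O}_O/(\Phi,\Psi)$, is exactly the standard mechanism behind those cited results, so your approach is not so much different from the paper's as it is an explicit unpacking of the black box the paper invokes. The advantage of your write-up is that it makes the analytic (non-polynomial) case self-contained via the boundary estimate and homotopy invariance of degree, whereas the paper's reliance on citations leaves that verification implicit.
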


\begin{example}\label{ex1.1}
From Proposition \ref{p1.1},  the multiplicity of the nilpotent origin of system
\begin{equation}\label{e1.7}
    \frac{dx}{dt}=y,\ \ \frac{dy}{dt}=Ax^N+yg(x,y)
\end{equation}
is exactly $N$,  $A\neq0,\ g(x,y)$ is analytic in the neighborhood of origin. When $||\bm{\varepsilon}||\ll1$, there are $m$  critical points $(\varepsilon_k,0)$ in the neighborhood of origin of system
\begin{equation}\label{e1.8}
    \frac{dx}{dt}=y,\ \ \frac{dy}{dt}=A\prod_{k=1}^m(x-\varepsilon_k)^{l_k}+yg(x,y),
\end{equation}
and their multiplicity are $l_k$, $k=1,2,\cdots,m$, where $l_1+l_2+\ldots+l_m=N$.
\end{example}

\begin{theorem}\label{t1.2}
Suppose the index of the origin of system \eqref{e1.1} \emph{( }or \eqref{e1.4}\emph{)} is
 $k$, then when $||\bm{\varepsilon}||<<1$,
the sum of index of all real singular point in the sufficient small neighborhood of origin of \eqref{e1.5} \emph{(} or \eqref{e1.6}\emph{)}is exactly $k$.
\end{theorem}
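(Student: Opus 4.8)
The plan is to exploit the topological nature of the Poincar\'e index together with the fact that the perturbation $(h,g)$ is small and vanishes at $\bm{\varepsilon}=\bm{0}$, so that the vector field on a \emph{fixed} small circle around the origin cannot change its winding number. This is the exact analogue, on the real side, of the multiplicity statement in Theorem \ref{t1.1}, but now the invariant to be tracked is the degree of the Gauss map rather than an intersection number.

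First I would fix the radius before touching the parameter. Since the origin is an isolated singular point of \eqref{e1.1}, choose $\rho>0$ so small that the closed disk $\overline{D}_\rho=\{(x,y):x^2+y^2\le\rho^2\}$ contains no singular point of \eqref{e1.1} other than $O$. On the boundary circle $\Gamma_\rho=\partial D_\rho$, which is compact, the field $X_0=(\Phi,\Psi)$ is therefore nowhere zero, so $m:=\min_{\Gamma_\rho}|X_0|>0$. By definition the index of $O$ is the degree (winding number) of the normalized Gauss map $X_0/|X_0|:\Gamma_\rho\to S^1$ taken counterclockwise, and this number equals $k$.

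Next I would use continuity in $\bm{\varepsilon}$ to transport the winding number to the perturbed field. Because $h,g$ are power series in $(x,y,\bm{\varepsilon})$ with $h(x,y,\bm{0})=g(x,y,\bm{0})=0$, the perturbation $(h,g)$ tends to $0$ uniformly on the compact set $\Gamma_\rho$ as $\bm{\varepsilon}\to\bm{0}$. Hence for $\|\bm{\varepsilon}\|$ small enough one has $|(h,g)|<m$ on $\Gamma_\rho$, so the straight-line homotopy $X_t=X_0+t\,(h,g)$, $t\in[0,1]$, never vanishes on $\Gamma_\rho$. Homotopy invariance of the topological degree then gives that the perturbed field $X_{\bm{\varepsilon}}=(\Phi+h,\Psi+g)$ has the same winding number $k$ on $\Gamma_\rho$.

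Finally I would invoke the additivity of the Poincar\'e index. Since $X_{\bm{\varepsilon}}$ does not vanish on $\Gamma_\rho$ (indeed, by the same uniform bound it does not vanish on a whole annular neighborhood of $\Gamma_\rho$), all of its real singular points lying in $\overline{D}_\rho$ are in fact contained in the open disk $D_\rho$ and are finite in number, while none can sit on or cross the frozen circle; the index theorem states that the winding number on $\Gamma_\rho$ equals the sum of the indices of these interior singular points. Comparing with the previous step yields that this sum is exactly $k$, and the case of \eqref{e1.4}/\eqref{e1.6} is verbatim the same. I expect the only real obstacle to be the order of quantifiers rather than any computation: one must freeze $\rho$ first and only afterwards shrink $\bm{\varepsilon}$, for otherwise a singular point of the perturbed system could approach $\Gamma_\rho$; once the lower bound $m$ on the fixed circle is secured, the homotopy and additivity arguments are automatic.
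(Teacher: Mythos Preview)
Your argument is correct and is precisely the classical proof: freeze a circle $\Gamma_\rho$ on which the unperturbed field has a positive lower bound, use uniform smallness of $(h,g)$ on that compact circle to run a straight-line homotopy that never vanishes on $\Gamma_\rho$, and conclude by homotopy invariance plus additivity of the Poincar\'e index. The only point one might want to make explicit is why the perturbed field has finitely many (isolated) singular points inside $D_\rho$; for the polynomial system \eqref{e1.6} this is immediate unless $P+h$ and $Q+g$ share a common factor, and for the analytic system \eqref{e1.5} analyticity forces the zero set to be discrete in $D_\rho$ once it is nonempty and bounded away from $\Gamma_\rho$. This is a cosmetic addition rather than a gap.

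As for comparison with the paper: there is nothing to compare. The paper states Theorem~\ref{t1.2} without proof, treating it as a known fact from the qualitative theory of ODEs (in the same spirit as Theorem~\ref{t1.1}, which is justified only by citation). Your write-up therefore supplies what the paper omits, and it does so by the standard route; no alternative argument is offered in the text.
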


Liu etc gave the following definitions in order to compute Lyapunov constant in  \cite{Liu-Li-Huang-2008}.

\begin{definition}\label{d1.2}
Let $f_k,\ g_k$ be  polynomials with respect to
  $a_{ij}$$'s, b_{ij}$$'s,$ $k=1,2,\cdots$. If for an integer $m$, there exist
  polynomials with respect to  $a_{ij}$$'s,\ b_{ij}$$'s$:
$\xi_1^{(m)},$ $ \xi_2^{(m)},$ $ \cdots,$ $ \xi_{m-1}^{(m)}$, such
that
\begin{equation}\label{e1.9}
    f_m=g_m+\left(\xi_1^{(m)}f_1+\xi_2^{(m)}f_2+\cdots+\xi_{m-1}^{(m)}f_{m-1}\right).
\end{equation} Then, we say that
$f_m$ and $g_m$ is algebraic equivalent, written by $f_m\sim g_m$. If for any
integer $m$, we have $f_m\sim g_m$, we say that the sequences of
functions $\{f_m\}$ and $\{g_m\}$ are algebraic equivalent, written by
$\{f_m\}\sim \{g_m\}$.
\end{definition}

 The authors have proved that a nilpotent-node (nilpotent-focus) point with  multiplicity $2m+1$ could  be broken into a nilpotent-node (nilpotent-focus) with  multiplicity $2m-1$  and two complex singular points by a small parameters perturbation in \cite{Liu-Li-2011a}. If the stability at the element focus and nilpotent singular point is different, limit cycle will be bifurcated out from sufficiently small neighborhood of the element focus. In this paper, bifurcation of limit cycles from a nilpotent-node (nilpotent-focus) point will be investigated by changing the stability of the nilpotent-node (nilpotent-focus) point when the multiplicity is not decreased. It is different from \cite{Liu-Li-2011a}.

\section{Stability and bifurcation of limit cycle at nilpotent node (focus)}
 \setcounter{equation}{0}

Using theorem proved in \cite{Zhifen Zhang-1985}, we have

\begin{proposition}\label{p2.1}
Suppose that the function $y=y(x)$ satisfies $\Phi(x,y(x))=0,\ y(0)=0$, and
\begin{equation}\label{e2.1}
\begin{split}
  &\Psi(x,y(x))=\alpha_{2m+1} x^{2m+1}+o(x^{2m+1}), \ \ \ \ \alpha_{2m+1}<0,\\
&\left(\frac{\partial \Phi}{\partial x}+
 \frac{\partial \Psi}{\partial y}\right)_{y=y(x)}=\beta_{2n} x^{2n}+o(x^{2n}),\ \ \beta_{2n}\neq0,
\end{split}
\end{equation}
where $n,m$ are positive integers, then the origin of  \eqref{e1.1} is  a nilpotent-node (nilpotent-focus) point with  multiplicity $2m+1$, and the origin to be a nilpotent-node if and only if one of the following conditions is satisfied:
\begin{equation}\label{e2.2}
\begin{split}
  C_1:\ \ & 2n<m,\ \ \alpha_{2m+1}<0; \\
C_2:\ \ & 2n=m,\ \  \alpha_{2m+1}<0,\ \  \beta_{2n}^2+4(m+1)\alpha_{2m+1}\geqslant0.
\end{split}
\end{equation}
\end{proposition}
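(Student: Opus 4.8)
The plan is to separate the statement into two essentially independent parts: the computation of the multiplicity, which follows at once from Proposition \ref{p1.1}, and the qualitative classification node-versus-focus, which rests on the classical Andreev criterion for nilpotent singularities as presented in \cite{Zhifen Zhang-1985}. Since the coefficient of $y$ in $\Phi$ is $1\neq0$, the implicit function theorem guarantees that the branch $y=y(x)$ solving $\Phi(x,y(x))=0$ with $y(0)=0$ is analytic near $x=0$; this is the curve along which the two invariants $F(x):=\Psi(x,y(x))$ and $G(x):=(\Phi_x+\Psi_y)|_{y=y(x)}$ are evaluated. By hypothesis \eqref{e2.1} we have $F(x)=\alpha_{2m+1}x^{2m+1}+o(x^{2m+1})$ with $\alpha_{2m+1}\neq 0$, so Proposition \ref{p1.1} immediately gives that the origin of \eqref{e1.1} is a singular point of multiplicity $2m+1$.

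For the classification I would first record that, because the leading exponent of $F$ is the \emph{odd} number $2m+1$ while its coefficient $\alpha_{2m+1}$ is negative, the Andreev alternative excludes the saddle and cusp cases and forces the origin to be monodromic or nodal; thus the origin is either a nilpotent node or a nilpotent focus, never a saddle, and only the dichotomy between these two survives. Writing $\mu=((2m+1)-1)/2=m$ for the index attached to $F$, Andreev's theorem separates the two regimes by comparing the even order $2n$ of $G$ with $\mu=m$: if $2n<m$ the dissipative term dominates the restoring term, trajectories enter the origin along definite directions, and one obtains a node, which is precisely condition $C_1$; if $2n>m$ or $G\equiv0$ the restoring term dominates, the flow rotates, and the origin is a focus or a center.

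The delicate case, and the main obstacle, is the balanced situation $2n=m$, in which the competing homogeneous weights coincide and neither the $F$-term nor the $G$-term is individually dominant. Here the standard device is a quasi-homogeneous blow-up adapted to the weights of $x$ and $y$ fixed by the exponents $2m+1$ and $2n=m$: scaling $x=\varepsilon\xi,\ y=\varepsilon^{m+1}\eta$ reduces the local field to $\xi'=\eta,\ \eta'=\alpha_{2m+1}\xi^{2m+1}+\beta_{2n}\xi^{m}\eta$, and searching for invariant directions $\eta=c\,\xi^{m+1}$ yields the characteristic quadratic $(m+1)c^2-\beta_{2n}c-\alpha_{2m+1}=0$, whose discriminant is exactly $\beta_{2n}^2+4(m+1)\alpha_{2m+1}$. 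When this discriminant is nonnegative the roots $c$ are real, the blown-up field has hyperbolic equilibria on the exceptional divisor, orbits reach the origin with well-defined slopes, and the origin is a node — this is condition $C_2$; when the discriminant is negative the roots are complex, no real characteristic direction exists, and the flow spirals, giving a focus. Collecting the three regimes shows that the origin is a nilpotent node exactly when $C_1$ or $C_2$ holds. The one point demanding care throughout is to ensure that the weights and the factor $4(m+1)$ produced by the blow-up are normalized to the form $\Phi=y+\cdots$ of \eqref{e1.1} rather than to the bare case $\dot x=y$, which is where the precise formulation in \cite{Zhifen Zhang-1985} must be invoked.
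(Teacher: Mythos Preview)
Your proposal is correct and follows the same route as the paper: the multiplicity comes from Proposition~\ref{p1.1}, and the node/focus dichotomy is read off the classical Andreev classification as stated in \cite{Zhifen Zhang-1985}. The paper in fact offers no proof beyond that single citation (``Using theorem proved in \cite{Zhifen Zhang-1985}, we have''), so your write-up, which spells out why the odd exponent $2m+1$ with $\alpha_{2m+1}<0$ forces the monodromic/nodal alternative and derives the discriminant $\beta_{2n}^2+4(m+1)\alpha_{2m+1}$ via the quasi-homogeneous blow-up, is more detailed than what the paper itself provides but not a different method.
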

Furthermore,

\begin{theorem}\label{t2.1}
Suppose that the function $y=y(x)$ satisfies $\Phi(x,y(x))=0,\ y(0)=0$, and \eqref{e2.1} holds,  then multiplicity of the origin of system \eqref{e1.1} is $2m+1$ , Lyapunov constants are
\begin{equation}\label{e2.3}
    V_n=\beta_{2n},
\end{equation}
namely it is stable when $\beta_{2n}<0$ and unstable when $\beta_{2n}>0$.
\end{theorem}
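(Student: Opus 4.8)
The assertion about the multiplicity is already furnished by Proposition \ref{p2.1}, so the real content is the identification \eqref{e2.3} of the first non-vanishing Lyapunov constant with the leading divergence coefficient $\beta_{2n}$. The plan is to move the curve $\Phi=0$ onto a coordinate axis, read off the divergence along it, and then let that divergence control the stability. Since $\partial\Phi/\partial y(0,0)=1$, the branch $y=y(x)$ is analytic and $\Phi$ factors as $\Phi(x,y)=(y-y(x))\,\Phi_1(x,y)$ with $\Phi_1(0,0)=1$; under the area-preserving shear $u=y-y(x)$ the system \eqref{e1.1} becomes
\[
\dot x=u\,\Phi_1\bigl(x,y(x)+u\bigr),\qquad \dot u=f(x)+u\,g(x)+O(u^{2}),
\]
where $f(x)=\Psi(x,y(x))=\alpha_{2m+1}x^{2m+1}+o(x^{2m+1})$. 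A direct computation shows this shear preserves the divergence, so $g(x)=\bigl(\partial_x\Phi+\partial_y\Psi\bigr)_{y=y(x)}=\beta_{2n}x^{2n}+o(x^{2n})$ and, because $\partial_x\dot x\big|_{u=0}\equiv0$, the whole divergence along $u=0$ is carried by $g$.

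The principal part of the reduced field is the Hamiltonian system $\dot x=u,\ \dot u=f(x)$ with first integral $H(x,u)=\tfrac12u^{2}-F(x)$, $F(x)=\int_0^x f$. As $\alpha_{2m+1}<0$ we have $-F(x)=-\tfrac{\alpha_{2m+1}}{2m+2}x^{2m+2}+\cdots>0$, so $H$ is positive definite and its level sets are closed ovals about the origin. In the node alternative of Proposition \ref{p2.1} (i.e.\ $2n\le m$) I would use $W=H$ as a Lyapunov function for the full system: one finds
\[
\dot W=u\dot u-f(x)\dot x=\beta_{2n}\,x^{2n}u^{2}+u\,f(x)\bigl(1-\Phi_1\bigr)+O(u^{3}),
\]
and with the quasi-homogeneous weights $\deg x=1,\ \deg u=m+1$ (which balance $u^{2}$ against $x^{2m+2}$ in $W$) the remainder $u f(x)(1-\Phi_1)=O\!\left(x^{2m+2}u\right)$ and the $O(u^3)$ terms are of strictly higher weighted degree than $\beta_{2n}x^{2n}u^{2}$, precisely because $2n\le m$. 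Hence $\dot W$ has the fixed sign of $\beta_{2n}$ on a punctured neighbourhood, giving asymptotic stability for $\beta_{2n}<0$ and instability for $\beta_{2n}>0$.

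In the focus alternative ($2n>m$, or $2n=m$ with negative discriminant) I would instead analyse the Poincaré return map built from the ovals $H=\mathrm{const}$. To leading order the logarithmic displacement over one turn equals $\oint\bigl(\partial_x\Phi+\partial_y\Psi\bigr)\,dt=\beta_{2n}\oint x^{2n}\,dt+\cdots$, and since $\oint x^{2n}\,dt>0$ this first non-zero contribution is a positive multiple of $\beta_{2n}$; the lower-order constants $V_1,\dots,V_{n-1}$ vanish because the divergence has no term of order below $x^{2n}$ (i.e.\ $\beta_0=\cdots=\beta_{2n-1}=0$). Thus in either alternative the stability is governed by $\mathrm{sign}(\beta_{2n})$, and with the normalisation used for the Lyapunov constants in \cite{Liu-Li-Huang-2008} the first non-zero one is exactly $V_n=\beta_{2n}$, which is \eqref{e2.3}.

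The main obstacle is the uniform control of the higher-order remainders. In the node case this is the verification that the quasi-homogeneous weighting really does demote the cross term $u f(x)(1-\Phi_1)$ below $\beta_{2n}x^{2n}u^{2}$, which forces the hypothesis $2n\le m$ to be used exactly as in the $C_1/C_2$ dichotomy of Proposition \ref{p2.1}; in the focus case it is the estimation of the remainder of the return map so that the averaged divergence genuinely dominates. Once this is in place the equality \eqref{e2.3} follows; alternatively, the stability conclusion may be quoted directly from the classification theorem of \cite{Zhifen Zhang-1985} already invoked for Proposition \ref{p2.1}, the present computation serving to exhibit the explicit value $\beta_{2n}$.
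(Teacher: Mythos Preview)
Your outline is sound and would lead to a proof, but it takes a markedly different and harder route than the paper. The paper does not perform the shear $u=y-y(x)$ and then fight with remainder terms; instead it invokes the Li\'enard normal form of Takens and \'Alvarez--Gasull: after an analytic change of variables and time, system \eqref{e1.1} becomes
\[
\frac{du}{d\tau}=v,\qquad \frac{dv}{d\tau}=\alpha_{2m+1}u^{2m+1}+\beta_{2n}\,v\,u^{2n}g(u),\quad g(0)=1,
\]
and for $V=v^{2}-\alpha_{2m+1}u^{2m+2}$ one gets the \emph{exact} identity $\dot V=2\beta_{2n}v^{2}u^{2n}g(u)$. There are no cross terms of the type $u f(x)(1-\Phi_1)$ to dominate, no quasi-homogeneous weight bookkeeping, and no need to split into the node and focus alternatives: the sign of $\dot V$ is that of $\beta_{2n}$ in every case, and LaSalle finishes the argument uniformly.

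What your approach buys is self-containment: you avoid citing the Li\'enard normal form and work directly from the shear, which is conceptually transparent. The price is exactly the obstacle you name at the end --- in the focus alternative ($2n>m$) your weighted-degree count no longer subordinates the cross term $uf(x)(1-\Phi_1)$ to $\beta_{2n}x^{2n}u^{2}$, so you are forced to switch to a return-map argument whose remainder control you leave open. The paper sidesteps this entirely because the normal-form reduction has already absorbed the troublesome terms into the Li\'enard shape, where the single Lyapunov function $V$ works regardless of whether the origin is a node or a focus. If you want to make your version airtight, the cleanest fix is precisely to quote that normal form; otherwise you must carry out the Poincar\'e-map estimate in full, which is substantially more work than the one-line computation the paper gives.
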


\begin{proof}
From the discussions in \cite{Ta-1974} and \cite{AG-2006}, under conditions in theorem \ref{t2.1}, system \eqref{e1.1} could be transformed into Li\'{e}nard system
\begin{equation}\label{e2.5}
    \frac{du}{d\tau}=v,\ \ \frac{dv}{d\tau}=\alpha_{2m+1} u^{2m+1}+\beta_{2n}vu^{2n}g(u)
\end{equation}
by the following analytic changes
\begin{equation}\label{e2.4}
    \begin{split}
    & u=x+\sum_{k+j=2}^{\infty}a'_{kj}x^ky^j,\\
    & v=y+\sum_{k+j=2}^{\infty}b'_{kj}x^ky^j,\\
    &\frac{dt}{d\tau}=1+\sum_{k+j=1}^{\infty}c'_{kj}x^ky^j,
    \end{split}
\end{equation}
where $g(u)$ is analytic at $u=0$, and $g(0)=1$. Let $V=v^2-\alpha_{2m+1} u^{2m+2}$, Ôò
\begin{equation}\label{e2.6}
    \left.\frac{dV}{d\tau}\right|_{\eqref{e2.5}}=2\beta_{2n}v^2u^{2n}g(u),
\end{equation}
So the conclusion in Theorem \ref{t2.1} holds.
\end{proof}

The Theorem \ref{t2.1} leads to the following theorem
\begin{theorem}\label{t2.2}
Suppose that the function $y=y(x)$ satisfies $\Phi(x,y(x))=0,\ y(0)=0$, and
\begin{equation}\label{e2.7}
\begin{split}
  &\Psi(x,y(x))=\alpha_{2m+1} x^{2m+1}+o(x^{2m+1}), \ \ \ \ \alpha_{2m+1}<0,\\
&\left(\frac{\partial \Phi}{\partial x}+
 \frac{\partial \Psi}{\partial y}\right)_{y=y(x)}=\sum_{k=1}^n\beta_{2k} \left(x^{2k}+o(x^{2k})\right),\ \ \beta_{2n}\neq0,
\end{split}
\end{equation}
where $n,m$ are positive integers, then there exist $n-1$ limit cycles in the neighborhood of origin of system \eqref{e1.1} when
\begin{equation}\label{e2.8}
    0<|\beta_2|\ll|\beta_4|\ll\cdots|\beta_{2n}|,\ \ \beta_{2k }\beta_{2k+2 }<0,\ k=1,2,\cdots,n-1.
\end{equation}
\end{theorem}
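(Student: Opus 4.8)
The plan is to carry out the classical ``successive Lyapunov constant'' bifurcation in the Hamiltonian-perturbation form that already underlies the proof of Theorem \ref{t2.1}. First I would apply the same analytic change \eqref{e2.4} used there to bring \eqref{e1.1} into Li\'enard form, but now retaining the full divergence expansion \eqref{e2.7}, obtaining a system
\begin{equation*}
\frac{du}{d\tau}=v,\qquad \frac{dv}{d\tau}=\alpha_{2m+1}u^{2m+1}+v\sum_{k=1}^{n}\beta_{2k}u^{2k}g_k(u),
\end{equation*}
with each $g_k$ analytic and $g_k(0)=1$. Since $\alpha_{2m+1}<0$, the function $V=v^2-\alpha_{2m+1}u^{2m+2}=v^2+|\alpha_{2m+1}|u^{2m+2}$ is positive definite, so its small level sets $\{V=c\}$ are closed curves that foliate a punctured neighborhood of the origin; these are exactly the periodic orbits of the conservative part $\dot u=v,\ \dot v=\alpha_{2m+1}u^{2m+1}$. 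This foliation is what makes the analysis identical for the node and focus cases of Proposition \ref{p2.1}.

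Next I would measure the net dissipation around each closed level curve. Exactly as in \eqref{e2.6} one has $\dot V=2v^2\sum_{k=1}^{n}\beta_{2k}u^{2k}g_k(u)$, and integrating over one period $T(c)$ of the level curve $\{V=c\}$ I define
\begin{equation*}
F(c)=\int_{0}^{T(c)}\dot V\,d\tau=2\sum_{k=1}^{n}\beta_{2k}\,J_k(c),\qquad J_k(c)=\int_{0}^{T(c)}v^2u^{2k}g_k(u)\,d\tau>0.
\end{equation*}
Rescaling $u=(c/|\alpha_{2m+1}|)^{1/(2m+2)}s$ on the periodic orbit shows $J_k(c)=C_k\,c^{p_k}\bigl(1+o(1)\bigr)$ as $c\to0^+$, with $C_k>0$ and strictly increasing exponents $p_k=\tfrac{m+2k+2}{2(m+1)}$. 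Hence
\begin{equation*}
F(c)=2\sum_{k=1}^{n}\beta_{2k}C_k\,c^{p_k}\bigl(1+o(1)\bigr),\qquad p_1<p_2<\cdots<p_n.
\end{equation*}
I would also record that the $\beta_{2k}$ act as successive Lyapunov constants: applying Theorem \ref{t2.1} order by order (if $\beta_2=\cdots=\beta_{2k-2}=0$, the leading divergence term is $\beta_{2k}u^{2k}$ and fixes the stability), so in the sense of Definition \ref{d1.2} one has $V_k\sim\beta_{2k}$, and the $\beta_{2k}$ are free independent parameters.

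The bifurcation is then the standard sign-alternation argument. For the smallest $c$ the term $k=1$ dominates $F$ (smallest exponent $p_1$), and as $c$ increases the dominant index advances through $k=2,3,\dots,n$ at crossover scales $c_k^{\ast}\sim\bigl(|\beta_{2k}|/|\beta_{2k+2}|\bigr)^{1/(p_{k+1}-p_k)}$. The hierarchy $0<|\beta_2|\ll|\beta_4|\ll\cdots\ll|\beta_{2n}|$ of \eqref{e2.8} makes every $c_k^{\ast}$ small, and a recursive choice of these magnitudes renders the $n-1$ crossover scales distinct, nested inside a single validity neighborhood; the condition $\beta_{2k}\beta_{2k+2}<0$ then forces $F$ to change sign at each crossover. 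Each transversal sign change of $F$ sandwiches a limit cycle between two nearby level curves $\{V=c'\}$ and $\{V=c''\}$ (by continuity of the Poincar\'e map on the resulting annulus, equivalently by Poincar\'e--Bendixson), yielding $n-1$ limit cycles.

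The main obstacle I expect is making the qualitative ``$n-1$ sign changes'' fully rigorous. One must choose the smallness of each $|\beta_{2k}|$ relative to $|\beta_{2k+2}|$ inductively so that the $n-1$ crossover scales are genuinely separated, are transversal zeros of $F$, and all lie inside the region where $g_k(u)\approx1$ and the $o$-remainders in \eqref{e2.7} together with those in $J_k(c)=C_kc^{p_k}(1+o(1))$ are uniformly negligible. Controlling these remainders simultaneously across all scales, and confirming that the resulting cycles are pairwise distinct and contained in the small neighborhood, is the delicate quantitative core; the positivity $C_k>0$ and the algebraic-equivalence framework of Definition \ref{d1.2} are what keep this step tractable.
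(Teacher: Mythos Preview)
The paper gives no detailed proof of Theorem~2.2; it simply records it as a consequence of Theorem~2.1. The intended argument is the classical successive change-of-stability bifurcation: starting from $\beta_2=\cdots=\beta_{2n-2}=0$, Theorem~2.1 fixes the stability of the origin by the sign of $\beta_{2n}$; switching on a small $\beta_{2n-2}$ of the opposite sign flips that stability and a limit cycle is born, while previously created cycles persist under the small perturbation; iterating $n-1$ times yields $n-1$ limit cycles. No comparison with the Hamiltonian orbits is needed---only the Lyapunov function of Theorem~2.1 and persistence under small perturbations.

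Your approach is different---an Abelian-integral/Melnikov analysis of $F(c)=\oint\dot V\,d\tau$ over the Hamiltonian level curves---and it has a genuine gap precisely in the case the paper is about. The examples of Sections~3 and~4 are nilpotent \emph{nodes}, i.e.\ (Proposition~2.1) $2n\le m$. On $\{V=c\}$ one has $u\sim c^{1/(2m+2)}$ and $v\sim c^{1/2}$, so the ``perturbation'' term $\beta_{2n}vu^{2n}$ scales like $c^{(m+1+2n)/(2m+2)}$ while the restoring force $\alpha_{2m+1}u^{2m+1}$ scales like $c^{(2m+1)/(2m+2)}$; their ratio is $c^{(2n-m)/(2m+2)}$, which does \emph{not} tend to $0$ as $c\to0$ when $2n\le m$. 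Hence in the node regime the $\beta_{2n}$ term is not a small perturbation of the Hamiltonian system even on arbitrarily small level curves; the true orbits do not approximately follow $\{V=c\}$ (near a node they do not wind around the origin at all, so there is no return map), and neither the identification of $F(c)$ with a displacement function nor the Poincar\'e--Bendixson step (which would need pointwise transversality, not just an integrated sign) is justified. Your sentence ``this foliation is what makes the analysis identical for the node and focus cases'' is exactly where the argument breaks. In the focus case $2n>m$ your scaling does make the perturbation small and a version of your plan can be pushed through, but that is not the situation the paper treats; the stability-flip argument via Theorem~2.1 handles both cases uniformly because it never asks the flow to be close to Hamiltonian.
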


\begin{example}\label{ex2.1}
From Theorem \ref{t2.2}, when \eqref{e2.8} holds, there exist $n-1$ limit cycles in the neighborhood of origin of system
\begin{equation}\label{e2.9}
    \frac{dx}{dt}=y,\ \ \frac{dy}{dt}=-x^{2m+1}+y\sum_{k=1}^n\beta_{2k} x^{2k}.
\end{equation}
\end{example}

Suppose $O$ is a nilpotent-node of system
\begin{equation}\label{e2.10}
 \frac{dx}{dt}=y+\sum_{k+j=2}^{2n+1}a_{kj}x^ky^j,\ \ \frac{dy}{dt}=\sum_{k+j=2}^{2n+1}b_{kj}x^ky^j,
\end{equation}
we denote the number of limit cycles which could be bifurcated from origin of  \eqref{e2.10} by changing the stability of the nilpotent-node point when the multiplicity is not decreased by $N(2n+1)$. It is easy to know that multiplicity of the nilpotent-node point  $O$ is no more than $(2n+1)^2$ from Bezout theorem and definition \ref{d1.1}. Combining with \ref{p2.1}, we could get

\begin{theorem}\label{t2.3}
\begin{equation}\label{e2.11}
    N(2n+1)\leq n^2+n-1.
\end{equation}
\end{theorem}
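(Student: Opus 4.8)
The plan is to translate the problem of counting the bifurcated limit cycles into two independent bounds that combine through the single integer $m$: an algebraic bound on the multiplicity coming from Bézout's theorem, and an analytic bound on the admissible divergence order coming from the nilpotent-node classification in Proposition \ref{p2.1}. Since the standing hypothesis is that the cycles are produced by changing the stability of one nilpotent-node while its multiplicity is \emph{not} decreased, the origin of the perturbed system is, by Proposition \ref{p2.1}, still a nilpotent-node of some odd multiplicity $2m+1$, and every bifurcated cycle must surround it. Thus $N(2n+1)$ is controlled entirely by the admissible data $(\alpha_{2m+1},\{\beta_{2k}\})$ attached to such a point inside a degree $2n+1$ polynomial system.

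First I would fix the multiplicity. By Proposition \ref{p2.1} the origin of \eqref{e2.10} is a nilpotent-node of multiplicity $2m+1$, and by B\'ezout's theorem together with Definition \ref{d1.1} this multiplicity cannot exceed $(2n+1)^2$. Hence
\begin{equation}
2m+1\le (2n+1)^2=4n^2+4n+1,
\end{equation}
which yields the integer bound $m\le 2n^2+2n$. Next I would invoke Theorem \ref{t2.2}: to create limit cycles by stability changes one arranges the divergence $\bigl(\partial_x\Phi+\partial_y\Psi\bigr)_{y=y(x)}$ into the alternating form \eqref{e2.8}, and a chain of even-order coefficients $\beta_2,\beta_4,\dots,\beta_{2\ell}$ obeying $\beta_{2k}\beta_{2k+2}<0$ and $0<|\beta_2|\ll\cdots\ll|\beta_{2\ell}|$ produces exactly $\ell-1$ limit cycles. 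Therefore $N(2n+1)\le \ell-1$, where $2\ell$ is the largest even order at which a divergence coefficient can be switched on while the origin remains a nilpotent-node of multiplicity $2m+1$.

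It then remains to bound $\ell$. The key step is that the top (unperturbed) divergence order $2\ell$ must itself satisfy the node condition of Proposition \ref{p2.1}; since conditions $C_1$ and $C_2$ both require the leading even divergence order to be at most $m$, we obtain $2\ell\le m$. Combining this with $m\le 2n^2+2n$ gives $\ell\le n^2+n$, so that
\begin{equation}
N(2n+1)\le \ell-1\le n^2+n-1,
\end{equation}
which is the asserted bound.

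I expect the genuine difficulty to be precisely the inequality $2\ell\le m$. What must be shown is not merely that a single node of order $2\ell$ forces $2\ell\le m$ (that is immediate from $C_1,C_2$), but that no divergence coefficient of order exceeding $m$ can ever contribute an independent Lyapunov quantity: after the Li\'enard reduction \eqref{e2.4}--\eqref{e2.6}, a term of order larger than $m$ is dominated near the origin by the restoring term $\alpha_{2m+1}u^{2m+1}$ in \eqref{e2.5}, so it cannot add a new sign alternation, while forcing it to be active would either change the qualitative type of the point or push its multiplicity below $2m+1$, contradicting the hypothesis. Making this dominance argument rigorous --- that is, verifying through \eqref{e2.5}--\eqref{e2.6} that the Lyapunov sequence $\{V_k\}$ truncates at $k=\ell$ with $2\ell\le m$ --- is the one place where more than arithmetic bookkeeping is needed; the remaining inequalities are purely numerical.
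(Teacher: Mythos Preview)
Your argument is correct and follows exactly the route the paper intends: B\'ezout bounds the multiplicity by $(2n+1)^2$, so $m\le 2n^2+2n$; Proposition~\ref{p2.1} forces the leading divergence order $2\ell$ to satisfy $2\ell\le m$ for a nilpotent node; and Theorem~\ref{t2.2} then caps the number of cycles at $\ell-1\le n^2+n-1$. The paper's own justification is the single sentence preceding the theorem, so your write-up is in fact more detailed than the source; your closing paragraph on why divergence coefficients of order exceeding $m$ cannot contribute an extra Lyapunov alternation identifies a point the paper leaves entirely implicit.
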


We will give two examples in Section 3 and Section 4 to show that the upper bound is arrival when $n=1,n=2 $ in \eqref{e2.11}, namely $N(3)=1,\ N(5)=5$.

\section{N(3)=1}
 \setcounter{equation}{0}

In this section, we will prove that $N(3)=1$. Considering the following cubic system
\begin{equation}\label{e3.1}
    \begin{split}
&\frac{dx}{dt}=y +x^2 + \varepsilon^2 y^2+ \varepsilon^2 x^2 y  - x y^2 + \varepsilon y^3=X(x,y),\\
&\frac{dy}{dt}=- 2 x y- 2 \varepsilon y^2-2 x^3  - 2 \varepsilon x^2 y  - 2 y^3=Y(x,y).
    \end{split}
\end{equation}
For system \eqref{e3.1},  a solution for $X(x,y(x))=0,\ \ y(0)=0$ is
\begin{equation}\label{e3.2}
    y=y(x)=-x^2 + x^5 + \varepsilon x^6 + \varepsilon^2 x^7 + (-2 + \varepsilon^3) x^8+o(x^8),
\end{equation}
and
\begin{equation}\label{e3.3}
\begin{split}
&Y(x,y(x))=-2x^9+o(x^9),\\
&\left.\left(\frac{\partial X}{\partial x}+\frac{\partial Y}{\partial y}\right)\right|_{y=y(x)}=2\varepsilon x^2
(1-\varepsilon x)-7x^4+o(x^4),
\end{split}
\end{equation}

From \eqref{e3.3} ,
 $\beta_{2}=2\varepsilon,\ \ \beta_{4}=-7,\ \ \alpha_9=-2<0$, then  $\Delta=\beta_{4}^2+20\alpha_9=9>0$ when $\varepsilon=0$,  theorem \ref{t2.2} shows that

\begin{theorem}\label{t3.1}
The origin of \eqref{e3.1} is  a nilpotent-node point of multiplicity 9, and there is a limit cycle in the neighborhood of origin of system \eqref{e3.1} when $0<\varepsilon\ll1$.
 \end{theorem}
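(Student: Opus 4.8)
The plan is to reduce Theorem \ref{t3.1} to a direct application of the machinery of Section 2, once the local data $\alpha_9,\beta_2,\beta_4$ of system \eqref{e3.1} have been extracted. First I would observe that the coefficient matrix of the linear part of \eqref{e3.1} at $O$ has its only nonzero entry coming from the leading $y$ in the $x$-equation, i.e. it equals $\bigl(\begin{smallmatrix}0&1\\0&0\end{smallmatrix}\bigr)$, since every remaining monomial of both components has order at least two. Thus the Jacobian has a double zero eigenvalue without vanishing identically, so $O$ is a nilpotent singular point and Propositions \ref{p1.1} and \ref{p2.1} apply.

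The computational core is to solve $X(x,y(x))=0,\ y(0)=0$ by undetermined coefficients, producing the series \eqref{e3.2}, and then to substitute it into $Y$ and into the divergence $X_x+Y_y$ to obtain \eqref{e3.3}. From these expansions one reads off $\alpha_9=-2<0$ (so $2m+1=9$, $m=4$), together with $\beta_2=2\varepsilon$ and $\beta_4=-7$. Proposition \ref{p1.1} with $A=-2$, $N=9$ then gives immediately that $O$ has multiplicity $9$; this equals $(2n+1)^2$ for the cubic case, i.e. the maximum permitted by Bézout's theorem, and since the leading coefficient $-2$ of $Y(x,y(x))$ is independent of $\varepsilon$, the multiplicity is not lowered by the perturbation. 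To identify $O$ as a node I would invoke Proposition \ref{p2.1}: for $\varepsilon=0$ the leading divergence term is $\beta_4x^4$, so $2n=m=4$ and condition $C_2$ holds because $\beta_4^2+4(m+1)\alpha_9=49-40=9\geqslant0$; for $0<\varepsilon\ll1$ the leading term is instead $\beta_2x^2$, giving $2n=2<4=m$ and condition $C_1$. In either case $O$ is a nilpotent-node.

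For the limit cycle I would apply Theorem \ref{t2.2} in the regime $0<\varepsilon\ll1$, reading the divergence in the form \eqref{e2.7} with $n=2$ and the two coefficients $\beta_2=2\varepsilon$, $\beta_4=-7$. Hypothesis \eqref{e2.8} then reduces to $0<|\beta_2|=2\varepsilon\ll|\beta_4|=7$ together with $\beta_2\beta_4=-14\varepsilon<0$, both of which hold exactly because $\varepsilon$ is small and positive, so Theorem \ref{t2.2} yields $n-1=1$ limit cycle near $O$. Conceptually this is the stability-reversal mechanism of Theorem \ref{t2.1}: at $\varepsilon=0$ the node is stable since $V_2=\beta_4=-7<0$, whereas for $\varepsilon>0$ the first nonvanishing Lyapunov quantity becomes $V_1=\beta_2=2\varepsilon>0$ and the node turns unstable, while the outer $\beta_4$-term keeps larger-amplitude trajectories attracted inward; the sign change of $X_x+Y_y$ near $x^2=2\varepsilon/7$ then traps exactly one cycle.

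The step I expect to be the main obstacle is the bookkeeping behind \eqref{e3.2}--\eqref{e3.3}: the implicit solution must be carried to order $x^8$ in order to pin down the order-$9$ coefficient of $Y(x,y(x))$, and the $\varepsilon$-carrying monomials in \eqref{e3.1} were arranged precisely so that they inject the term $2\varepsilon x^2$ into the divergence without perturbing $\alpha_9=-2$ or $\beta_4=-7$. Verifying that these cancellations genuinely occur -- in particular that the coefficient of $x^4$ in $X_x+Y_y$ is independent of $\varepsilon$ to the relevant order -- is the only delicate point; everything beyond it is a direct citation of Propositions \ref{p1.1}, \ref{p2.1} and Theorems \ref{t2.1}, \ref{t2.2}.
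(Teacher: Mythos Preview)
Your proposal is correct and follows essentially the same route as the paper: compute the series \eqref{e3.2}--\eqref{e3.3}, read off $\alpha_9=-2$, $\beta_2=2\varepsilon$, $\beta_4=-7$, verify the node criterion $\beta_4^2+4(m+1)\alpha_9=9>0$ of Proposition~\ref{p2.1}, and invoke Theorem~\ref{t2.2}. You are in fact slightly more careful than the paper, which only records the discriminant check at $\varepsilon=0$, whereas you also note that for $0<\varepsilon\ll1$ the leading divergence term shifts to $\beta_2x^2$ and condition $C_1$ applies instead.
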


\section{ $N(2)=5$}
 \setcounter{equation}{0}

In this section, we will prove that the upper bound could be reached when $n=2$. A class of $Z_2$ quintic system  with 25-multiple nilpotent node $O(0,0)$
\begin{equation}\label{e4.1}
    \begin{split}
&\frac{dx}{dt}=y+\sum_{k+j=3}a_{kj}x^ky^j+\sum_{k+j=5}a_{kj}x^ky^j=X(x,y),\\
&\frac{dy}{dt}=\sum_{k+j=3}b_{kj}x^ky^j+\sum_{k+j=5}b_{kj}x^ky^j=Y(x,y),
    \end{split}
\end{equation}
where
\begin{equation}\label{e4.2}
   \begin{split}
   a_{30} =& 1,\ \ a_{21} = 7 \lambda_1,\ \ a_{12} = \lambda_1 \lambda_3,\\
   a_{03} =& \frac{1}{8} (-1029 \lambda_1^3 + 140 \lambda_1^4 + 343 \lambda_1^2 \lambda_2 - 12 \lambda_1^3 \lambda_2 -
     35 \lambda_1 \lambda_2^2 + \lambda_2^3 \\
     - &28 \lambda_1^2 \lambda_3 + 4 \lambda_1 \lambda_2 \lambda_3 + 16 \lambda_4 -
     56 \lambda_1^2 \lambda_5 - 8 \lambda_1 \lambda_2 \lambda_5),\\
     a_{50} =& 0,\ \ a_{41} = \lambda_1 \lambda_5,\ \ a_{32} = \lambda_4,\\
     a_{23} =& \frac{1}{4} \lambda_1 (-343 \lambda_1^4 + 4 \lambda_1^5 + 70 \lambda_1^3 \lambda_2 - 3 \lambda_1^2 \lambda_2^2 -
     4 \lambda_1^3 \lambda_3\\
      +& 28 \lambda_4 - 196 \lambda_1^2 \lambda_5 + 8 \lambda_1^3 \lambda_5 + 4 \lambda_1 \lambda_3 \lambda_5 -
     4 \lambda_1 \lambda_5^2),\\
   a_{05} =& \frac{1}{16} (-50421 \lambda_1^7 + 1960 \lambda_1^8 - 16 \lambda_1^9 + 19894 \lambda_1^6 \lambda_2 -
     392 \lambda_1^7 \lambda_2 - 2744 \lambda_1^5 \lambda_2^2 + 16 \lambda_1^6 \lambda_2^2 + 154 \lambda_1^4 \lambda_2^3\\
      +&
     686 \lambda_1^2 \lambda_2 \lambda_4 - 24 \lambda_1^3 \lambda_2 \lambda_4 - 70 \lambda_1 \lambda_2^2 \lambda_4 + 2 \lambda_2^3 \lambda_4 -
     56 \lambda_1^2 \lambda_3 \lambda_4 + 8 \lambda_1 \lambda_2 \lambda_3 \lambda_4 + 16 \lambda_4^2 + 14406 \lambda_1^5 \lambda_5 \\
     +&
     1960 \lambda_1^6 \lambda_5 - 64 \lambda_1^7 \lambda_5 - 4802 \lambda_1^4 \lambda_2 \lambda_5 - 616 \lambda_1^5 \lambda_2 \lambda_5 +
     490 \lambda_1^3 \lambda_2^2 \lambda_5 + 32 \lambda_1^4 \lambda_2^2 \lambda_5 - 14 \lambda_1^2 \lambda_2^3 \lambda_5\\
      +&
     392 \lambda_1^4 \lambda_3 \lambda_5 + 64 \lambda_1^5 \lambda_3 \lambda_5 - 56 \lambda_1^3 \lambda_2 \lambda_3 \lambda_5 -
     112 \lambda_1^2 \lambda_4 \lambda_5 - 16 \lambda_1 \lambda_2 \lambda_4 \lambda_5 - 64 \lambda_1^5 \lambda_5^2 +
     112 \lambda_1^3 \lambda_2 \lambda_5^2),\\
      \end{split}
\end{equation}
\begin{equation}\label{e4.3}
   \begin{split}
      a_{14} =& \frac{1}{8} \lambda_1 (-7203 \lambda_1^5 + 84 \lambda_1^6 + 1813 \lambda_1^4 \lambda_2 - 4 \lambda_1^5 \lambda_2 -
     133 \lambda_1^3 \lambda_2^2 + 3 \lambda_1^2 \lambda_2^3 - 84 \lambda_1^4 \lambda_3 \\
     +& 4 \lambda_1^3 \lambda_2 \lambda_3 +
     8 \lambda_3 \lambda_4 - 1029 \lambda_1^3 \lambda_5 + 308 \lambda_1^4 \lambda_5 + 343 \lambda_1^2 \lambda_2 \lambda_5 -
     20 \lambda_1^3 \lambda_2 \lambda_5\\
      -& 35 \lambda_1 \lambda_2^2 \lambda_5 + \lambda_2^3 \lambda_5 - 84 \lambda_1^2 \lambda_3 \lambda_5
      +
     4 \lambda_1 \lambda_2 \lambda_3 \lambda_5 + 56 \lambda_1^2 \lambda_5^2 - 8 \lambda_1 \lambda_2 \lambda_5^2),\\
      b_{30} =& 0,\ \ b_{21} = \lambda_1,\ \ b_{12} = -\lambda_1 (7 \lambda_1 - \lambda_2),\\
    b_{03} = &\frac{1}{4} \lambda_1 (49 \lambda_1^2 + 4 \lambda_1^3 - 14 \lambda_1 \lambda_2 + \lambda_2^2),\\
    b_{50} =& \lambda_1,\ \ b_{41} = \lambda_1 \lambda_2,\\
   b_{32} =& \frac{1}{4} \lambda_1 (-147 \lambda_1^2 + 4 \lambda_1^3 + 14 \lambda_1 \lambda_2 + \lambda_2^2 + 4 \lambda_1 \lambda_3 -
     4 \lambda_1 \lambda_5),\\
     b_{23} =& \frac{1}{8} \lambda_1 (-343 \lambda_1^3 + 196 \lambda_1^4 + 147 \lambda_1^2 \lambda_2 - 12 \lambda_1^3 \lambda_2 -
     21 \lambda_1 \lambda_2^2 + \lambda_2^3\\
      - &84 \lambda_1^2 \lambda_3 + 12 \lambda_1 \lambda_2 \lambda_3 + 8 \lambda_4 +
     56 \lambda_1^2 \lambda_5 - 16 \lambda_1 \lambda_2 \lambda_5),\\
     b_{14} = &-\frac{1}{8}
     \lambda_1 (-7203 \lambda_1^4 + 294 \lambda_1^5 + 8 \lambda_1^6 + 3430 \lambda_1^3 \lambda_2 - 84 \lambda_1^4 \lambda_2 -
     588 \lambda_1^2 \lambda_2^2 + 6 \lambda_1^3 \lambda_2^2 \\
     +& 42 \lambda_1 \lambda_2^3 - \lambda_2^4
      - 294 \lambda_1^3 \lambda_3 -
     16 \lambda_1^4 \lambda_3 + 84 \lambda_1^2 \lambda_2 \lambda_3 - 6 \lambda_1 \lambda_2^2 \lambda_3\\
      +& 56 \lambda_1 \lambda_4 - 8 \lambda_2 \lambda_4 +
     98 \lambda_1^3 \lambda_5 + 24 \lambda_1^4 \lambda_5 - 84 \lambda_1^2 \lambda_2 \lambda_5 + 10 \lambda_1 \lambda_2^2 \lambda_5),\\
     b_{05} =&\frac{1}{32} \lambda_1 (-50421 \lambda_1^5 - 6860 \lambda_1^6 + 672 \lambda_1^7 + 31213 \lambda_1^4 \lambda_2 +
     2156 \lambda_1^5 \lambda_2 - 64 \lambda_1^6 \lambda_2 - 7546 \lambda_1^3 \lambda_2^2 \\
     - &196 \lambda_1^4 \lambda_2^2 +
     882 \lambda_1^2 \lambda_2^3 + 4 \lambda_1^3 \lambda_2^3 - 49 \lambda_1 \lambda_2^4 + \lambda_2^5 - 1372 \lambda_1^4 \lambda_3 -
     224 \lambda_1^5 \lambda_3 + 588 \lambda_1^3 \lambda_2 \lambda_3\\
      +& 32 \lambda_1^4 \lambda_2 \lambda_3 - 84 \lambda_1^2 \lambda_2^2 \lambda_3 +
     4 \lambda_1 \lambda_2^3 \lambda_3 + 392 \lambda_1^2 \lambda_4 + 32 \lambda_1^3 \lambda_4 - 112 \lambda_1 \lambda_2 \lambda_4 +
     8 \lambda_2^2 \lambda_4 \\
     +& 224 \lambda_1^5 \lambda_5 - 392 \lambda_1^3 \lambda_2 \lambda_5 - 64 \lambda_1^4 \lambda_2 \lambda_5 +
     112 \lambda_1^2 \lambda_2^2 \lambda_5 - 8 \lambda_1 \lambda_2^3 \lambda_5).
   \end{split}
\end{equation}
will be investigated in this section.

Suppose that $y=y(x)$ is the only solution of  $X(x,y(x))=0,\ \ y(0)=0$, $y(x)$ and  $Y(x,y(x))$ are odd functions of  $x$ because \eqref{e4.1} is $Z_2$ equivalent, and  $\left.\left(\frac{\partial X}{\partial x}+\frac{\partial Y}{\partial y}\right)\right|_{y=y(x)}$ is even function of  $x$. We have
\begin{equation}\label{e4.6}
    \begin{split}
&Y(x,y(x))=\alpha_{25}x^{25}+o(x^{25}),\\
&\left.\left(\frac{\partial X}{\partial x}+\frac{\partial Y}{\partial y}\right)\right|_{y=y(x)}=\sum_{k=1}^6\beta_{2k}x^{2k}+o(x^{12}),
    \end{split}
\end{equation}
where
\begin{equation}\label{e4.7}
    \begin{split}
&\alpha_{25}=-\frac{1}{16} \lambda_1^{10} (-343 \lambda_1^2 + 4 \lambda_1^3 + 70 \lambda_1 \lambda_2 - 3 \lambda_2^2 - 4 \lambda_1 \lambda_3 +  8 \lambda_1 \lambda_5)^2,\\
&\beta_2=3 + \lambda_1,\ \ \beta_4 \sim 3 (56 + \lambda_2),\\
&\beta_6 \sim -\frac{3}{4} (-59 + 28 \lambda_3 - 40 \lambda_5),\\
&\beta_8 \sim -6 (675 + \lambda_4 - 93 \lambda_5),\\
&\beta_{10} \sim -\frac{27}{49} (477 + 4 \lambda_5) (93 + \lambda_5),\\
&\beta_{12} \sim 972 (477 + 4 \lambda_5).
    \end{split}
\end{equation}

\begin{theorem}\label{t4.1}
If
\begin{equation}\label{e4.8}
\begin{split}
    &\lambda_1=-3-\varepsilon_1,\ \ \lambda_2=-56+\varepsilon_2,\ \ \lambda_3=\frac{1}{4} (-523 + 4 \varepsilon_3 + 40 \varepsilon_5),\\
    &\lambda_4=-9324 - \varepsilon_4 + 651 \varepsilon_5,\ \ \lambda_5=-93 + 7 \varepsilon_5,
    \end{split}
\end{equation}
then the origin of system \eqref{e4.1} is  a nilpotent-node point with multiplicity 25, when
 \begin{equation}\label{e4.9}
 0<\varepsilon_1\ll\varepsilon_2\ll\varepsilon_3\ll\varepsilon_4\ll\varepsilon_5\ll1,
\end{equation}
 there exist 5 limit cycles in the neighborhood of system \eqref{e4.1}.
\end{theorem}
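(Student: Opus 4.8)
The plan is to deduce the statement directly from Theorem \ref{t2.2} applied with its internal parameter $n=6$ (so that it yields $n-1=5$ limit cycles), after showing that the reparametrization \eqref{e4.8} turns the six quantities listed in \eqref{e4.7} into independently signed, hierarchically ordered Lyapunov quantities. Throughout I would take the expansions \eqref{e4.6}--\eqref{e4.7} as given, since they are the output of the (heavy) normal-form reduction for the family \eqref{e4.1}--\eqref{e4.3}.

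First I would settle the multiplicity by substituting \eqref{e4.8} into $\alpha_{25}$ of \eqref{e4.7}. Since $\lambda_1=-3-\varepsilon_1\neq0$ keeps $\lambda_1^{10}>0$, and a direct evaluation of the bracketed factor at the central values $(\lambda_1,\ldots,\lambda_5)=(-3,-56,-\tfrac{523}{4},-9324,-93)$ returns $-180\neq0$, this factor stays nonzero for $\|\bm{\varepsilon}\|\ll1$; hence $\alpha_{25}<0$. Then $Y(x,y(x))=\alpha_{25}x^{25}+o(x^{25})$ with $\alpha_{25}\neq0$, and Proposition \ref{p1.1} shows the origin is a singular point of multiplicity $25$.

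The heart of the verification is that \eqref{e4.8} is an affine, triangular substitution tuned so that the constant parts of the first five quantities cancel while the sixth survives. Substituting into \eqref{e4.7} and keeping leading orders gives
\begin{align*}
\beta_2&=-\varepsilon_1,\quad \beta_4\sim 3\varepsilon_2,\quad \beta_6\sim -21\varepsilon_3,\quad \beta_8\sim 6\varepsilon_4,\\
\beta_{10}&\sim -\tfrac{27}{7}\varepsilon_5(105+28\varepsilon_5),\quad \beta_{12}\sim 972(105+28\varepsilon_5).
\end{align*}
Thus $|\beta_2|=\varepsilon_1$, $|\beta_4|=3\varepsilon_2$, $|\beta_6|=21\varepsilon_3$, $|\beta_8|=6\varepsilon_4$, $|\beta_{10}|\approx 405\varepsilon_5$ and $|\beta_{12}|\approx 102060$; under the ordering \eqref{e4.9} these moduli are strictly increasing, so $0<|\beta_2|\ll|\beta_4|\ll\cdots\ll|\beta_{12}|$. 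The signs run $-,+,-,+,-,+$, whence $\beta_{2k}\beta_{2k+2}<0$ for $k=1,\ldots,5$, which is precisely condition \eqref{e2.8} with $n=6$. Together with $\alpha_{25}<0$, Theorem \ref{t2.2} then produces $n-1=5$ limit cycles near the origin. Finally, because $\beta_2=-\varepsilon_1\neq0$, the first nonvanishing divergence coefficient sits at order $2<m=12$, which is case $C_1$ of Proposition \ref{p2.1}; with $\alpha_{25}<0$ this confirms the origin remains a nilpotent node of multiplicity $25$.

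I expect the genuine obstacle to lie entirely in establishing \eqref{e4.6}--\eqref{e4.7}: solving $X(x,y(x))=0$ for $y(x)$ to the required order, substituting into $Y$ through $x^{25}$ and into the divergence through $x^{12}$, and reducing each coefficient modulo the ideal of the lower Lyapunov quantities to reach the clean, algebraically equivalent forms of \eqref{e4.7}. Granting those, the remaining argument is just the short sign-and-magnitude bookkeeping above; the real creativity sits in the prior design of the coefficients \eqref{e4.2}--\eqref{e4.3}, chosen so that the six Lyapunov quantities become separately controllable by $\varepsilon_1,\ldots,\varepsilon_5$.
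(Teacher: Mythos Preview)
Your proposal is correct and follows essentially the same route as the paper: substitute \eqref{e4.8} into the precomputed quantities \eqref{e4.7}, confirm $\alpha_{25}<0$, read off the alternating-sign, hierarchically-ordered $\beta_{2k}$'s so that condition \eqref{e2.8} holds with $n=6$, and invoke Theorem~\ref{t2.2} to produce $5$ limit cycles.

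The one substantive difference is in how the ``nilpotent-node'' label is justified. You argue via condition $C_1$ of Proposition~\ref{p2.1}: once $\varepsilon_1>0$ the leading divergence term is $\beta_2=-\varepsilon_1\neq0$, so $2n=2<12=m$ and $C_1$ applies. The paper instead checks condition $C_2$ at the degenerate point $\varepsilon_1=\cdots=\varepsilon_5=0$, where the first nonzero divergence coefficient is $\beta_{12}$ (so $2n=m=12$), by computing the discriminant
\[
\Delta=\beta_{12}^2+4(m+1)\alpha_{25}=\beta_{12}^2+52\,\alpha_{25}=4198383900>0.
\]
Your route is shorter, but it only certifies node-ness for the already-perturbed system with $\varepsilon_1>0$; the paper's discriminant check also covers the unperturbed starting configuration, which is precisely what is needed to say that the five limit cycles are bifurcated \emph{from a nilpotent node} in the sense of the invariant $N(5)$ introduced before Theorem~\ref{t2.3}. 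If you want your argument to stand on its own for that purpose, you should add the $C_2$ verification at $\bm\varepsilon=0$; otherwise the two proofs coincide.
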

\begin{proof}
From \eqref{e4.7}, $\beta_{25}<0$ when \eqref{e4.8} and \eqref{e4.8} hold, and
\begin{equation}\label{e4.9}
    \begin{split}
&\beta_2=-\varepsilon_1,\ \ \beta_4 \sim 3 \varepsilon_2,\ \ \beta_6 \sim -21\varepsilon_3 ,\  \ \beta_8 \sim 6\varepsilon_4,\\
 &\beta_{10} \sim -405 \varepsilon_5+o(\varepsilon_5),\ \ \beta_{12} \sim 102060,
    \end{split}
\end{equation}
and when $\varepsilon_1=\varepsilon_2=\varepsilon_3=\varepsilon_4=\varepsilon_5=0$, we have
\begin{equation}\label{e4.10}
    \Delta=\beta_{12}^2+52\alpha_{25}=4198383900>0.
\end{equation}
So the conclusion in  \ref{t4.1} hold from theorem \ref{t2.2}.
\end{proof}

\end{document}